\newtheorem{theorem}{Theorem}[section]
\newtheorem{proposition}[theorem]{Proposition}
\newtheorem{lemma}[theorem]{Lemma}
\theoremstyle{definition}
\newtheorem{definition}[theorem]{Definition}
\newtheorem{remark}[theorem]{Remark}
\numberwithin{equation}{section}
\newcommand{\C}{\mathbb{C}}
\begin{document}

\title[Uniformization of branched surfaces and Higgs bundles]{Uniformization
of branched surfaces and Higgs bundles}

\author[I. Biswas]{Indranil Biswas}

\address{School of Mathematics, Tata Institute of Fundamental Research,
Homi Bhabha Road, Mumbai 400005, India}

\email{indranil@math.tifr.res.in}

\author[S. Bradlow]{Steven Bradlow}

\address{Department of Mathematics, University of Illinois at Urbana-Champaign,
Urbana, IL 61801, USA}

\email{bradlow@math.uiuc.edu}

\author[S. Dumitrescu]{Sorin Dumitrescu}

\address{Universit\'e C\^ote d'Azur, CNRS, LJAD, France}

\email{dumitres@unice.fr}

\author[S. Heller]{Sebastian Heller}

\address{Institute of Differential Geometry,
Leibniz Universit\"at Hannover,
Welfengarten 1, 30167 Hannover, Germany}

\email{seb.heller@gmail.com}

\subjclass[2010]{30F10, 14D21, 53C43}

\keywords{Branched surface, uniformization, harmonic map, Hopf differential, Higgs bundle}

\date{}

\begin{abstract}
Given a compact connected Riemann surface $\Sigma$ of genus $g_\Sigma\, \geq\, 2$, and an effective divisor
$D\, =\, \sum_i n_i x_i$ on $\Sigma$ with $\text{degree}(D)\, <\, 2(g_\Sigma -1)$, there is a
unique cone metric on $\Sigma$ of constant negative curvature $-4$ such that the cone angle at
each point $x_i$ is $2\pi n_i$ \cite{McO,Tr}. We describe the Higgs bundle on $\Sigma$ corresponding to the
uniformization associated to this conical metric. We also give a family of Higgs bundles on $\Sigma$
parametrized by a nonempty open subset of
$H^0(\Sigma,\,K_\Sigma^{\otimes 2}\otimes{\mathcal O}_\Sigma(-2D))$ that correspond to conical
metrics of the above type on moving Riemann surfaces. These are inspired by Hitchin's results
in \cite{Hi} for the case $D\,=\, 0$.
\end{abstract}

\maketitle

\section{Introduction}

In one of the earliest applications of Higgs bundles, Hitchin exploited the correspondence between their stability 
(in the sense of Geometric Invariant Theory) and the existence of solutions to natural gauge-theoretic equations to 
construct smooth hyperbolic structures, i.e., Riemannian metrics with constant negative curvature, on a closed 
oriented surface (see \cite{Hi}), and thereby to parameterize Teichm\"uller space by the holomorphic quadratic 
differentials on a fixed Riemann surface. This approach has since been adapted by others (see below) 
to produce hyperbolic structures with conical singularities, thereby yielding a Higgs bundle proof of 
the uniformization theorem with prescribed conical singularities by McOwen and Troyanov \cite{McO,Tr}. These 
constructions of singular metrics are based on Higgs bundles with parabolic structures or, equivalently, filtered 
Higgs bundles in the sense of Simpson \cite{Si}. In this article we describe how, under suitable conditions, 
singular hyperbolic metrics can be obtained from ordinary Higgs bundles.

The Higgs bundles in Hitchin's parameterization of Teichm\"uller space, and also in the above constructions of 
singular hyperbolic structures, are pairs $(E,\,\Phi)$, where $E$ is a rank two holomorphic bundle on a Riemann 
surface $\Sigma$ of genus $g_\Sigma$, and the Higgs field $\Phi$ is a twisted endomorphism of it (see Section 
\ref{sec:Higgs}). Such pairs define ${\rm SL}(2,\C)$-Higgs bundles. In fact they are additionally constrained so 
that the structure group of the bundles reduce to ${\rm SO}(2,\C)$ and the Higgs fields are symmetric with respect 
to the orthogonal structure. This means that they are determined by data sets of the form $(L,\,\alpha,\,\delta)$, 
where $L$ is a holomorphic line bundle on $\Sigma$ and $\alpha,\, \delta$ (the components of the Higgs field) are 
respectively holomorphic sections of $L^2\otimes K_{\Sigma}$ and $L^{-2}\otimes K_{\Sigma}$ (see Section 
\ref{sec:Higgs}). As such they are properly viewed as ${\rm SL}(2,\mathbb R)$-Higgs bundles or in other words, under 
the non-abelian Hodge correspondence, the Higgs bundles correspond to surface group representations in the split 
real form ${\rm SL}(2,\mathbb R)\,\subset\, {\rm SL}(2,\mathbb C)$.

From the perspective of the Higgs bundles, the results in this paper address the following question about 
$\mathrm{SL}(2,\mathbb R)$-Higgs bundles on a closed Riemann surface $\Sigma$. It is well known that such Higgs 
bundles admit a moduli space which has multiple connected components. Among these are $2^{2g}$ connected components 
called Hitchin components, where the underlying holomorphic
line bundle $L$ in the triple $(L,\,\alpha,\,\delta)$ is a square root 
of the canonical bundle $K_{\Sigma}$. The Higgs bundles in these components are precisely those which appear in 
Hitchin's parameterization of hyperbolic structures on $\Sigma$. The other components are labelled by an integer 
$\tau$, equal to degree of $L$, which satisfies the so-called Milnor-Wood bound $|\tau|<g-1$ \cite{Go,Hi}. The 
question we address is whether these other components parameterize geometric structures analogous to the hyperbolic 
structures parameterized by the Hitchin components. The answer we give here applies to a certain subset in each 
component, for which we show that it parameterizes hyperbolic structures with conic singularities; see Theorem 
\ref{thm1}.

The starting point for our results is the fact that if $(L,\,\delta,\,\alpha)$ is a stable ${\rm SL}(2,\mathbb R)$-Higgs 
bundle or, equivalently, if
$$
\left(L\oplus L^{-1},\,\begin{pmatrix}0& \alpha\\ \delta&0\end{pmatrix}\right)
$$
is a stable ${\rm SL}(2,\mathbb R)$-Higgs bundle, then 
$L$ admits a hermitian metric which satisfies the Higgs bundle equations. The metric may be described as an
equivariant harmonic map from $\widetilde{\Sigma}\,=\,\mathbb D$, the universal cover of $\Sigma$, to the hyperbolic plane
$\mathbb H\, :=\,{\rm SL}(2,\mathbb R)/{\rm SO}(2)$.

If ${\rm degree}(L)\,=\,g-1$, then $L^{\otimes 2}\,=\,K_{\Sigma}$ and Hitchin showed in \cite{Hi} how the Higgs 
bundle metric combines with the Higgs field to define a hyperbolic structure, i.e., a constant negative curvature 
metric on $K^{-1}_{\Sigma}$. Our key observation, based on considerations of the equivariant harmonic maps, is that 
if $L^{\otimes 2}\,=\, K_\Sigma\otimes {\mathcal O}_\Sigma(-D)$ , where $D$ is an effective divisor on $\Sigma$, 
then the analogous construction yields a symmetric 2-tensor on the complement $\Sigma\setminus D$ which defines a 
hyperbolic structure wherever
the 2-tensor is non-degenerate (see Proposition \ref{h2subseth3}). Moreover at the points in $D$ 
this metric has conic singularities with cone angles given by the multiplicities of the
points in $D$. The main part of our proof thus 
involves showing the non-degeneracy of the metric away from $D$ under natural conditions.

The condition $L^{\otimes 2}\,=\, K_\Sigma\otimes {\mathcal O}_\Sigma(-D)$ requires that ${\rm degree}(D)$ is even.
We also address the case where ${\rm degree}(D)$ is odd (see Section \ref{sec: odd degree}) --- in which case it is not possible to satisfy $L^{\otimes 2}\,=\, K_\Sigma\otimes {\mathcal O}_\Sigma(-D)$ --- by introducing an unramified double cover of $\Sigma$.

Our methods are closely related to other gauge-theoretic approaches to singular hyperbolic structures. Some (see 
\cite{KW, M, NS,BGG}) use rank two parabolic Higgs bundles or, in the case of \cite{NS}, equivalent orbifold Higgs 
bundles on orbifold quotients of a smooth surface, while \cite{Ba} is based on the abelian vortex equations.

The Higgs bundles are parabolic (or orbifold) versions of the ${\rm SL}(2,\mathbb{R})$-Higgs bundles in the Hitchin 
component, while our results extend to Higgs bundles in components with non-maximal $\tau$. All of these 
gauge-theoretic constructions start with a fixed smooth hyperbolic structure on the underlying real surface of 
$\Sigma$. In \cite{KW, M} and \cite{Ba} the new singular hyperbolic structures are in the same conformal class as 
the background metric, while our construction covers a subset in Teichm\"uller space containing 
$H^0(\Sigma,\,K_\Sigma^{\otimes 2}\otimes{\mathcal O}_\Sigma(-2D-D_{\rm red}))$ (see Theorem \ref{thm1}). On the 
other hand, the cone angles at the singularities in our metrics are all integer multiples of $2\pi$, while the 
hyperbolic structures coming from the parabolic Higgs bundles in \cite{NS} are rational multiples of $2\pi$ and in 
\cite{KW} they can have arbitrary small cone angles. The hyperbolic structures constructed in \cite{BGG} are on a 
punctured Riemann surface. The metrics are not necessarily in the conformal class of the uniformizing metric on the 
Riemann surface (in other words, in the Higgs field $\alpha\,\neq\, 0$ is allowed) but the asymptotics at
the punctures are not considered there.

Baptista (in \cite{Ba}) uses solutions of the vortex equations on line bundles to construct 
singular metrics on the Riemann surface, with singularities at the zeros of the vortex section. 
The new metrics are in the same conformal class as the metric on the Riemann surface. Starting 
with a hyperbolic structure on the Riemann surface, then new singular metrics define hyperbolic 
structures away from their singularities and have conical singularities with integral cone 
angles (determined by the order of vanishing of the section). Though no details are given, 
Baptista asserts that non-integral cone angles can be achieved by using vortices with parabolic 
singularities or by starting with singular hyperbolic structures, as in \cite{BB13}. In fact, 
the abelian vortices used by Baptista can be viewed as special cases of the ${\rm SL}(2,\mathbb 
R)$-Higgs bundles, namely those which are fixed by a well-known $\C^*$-action on the Higgs 
bundle moduli space. From this perspective, we recover the Baptista hyperbolic structures as 
special cases of our results.

We note finally that Mandelbaum introduced the notion of a branched complex projective structure on a Riemann 
surface with branching over an effective divisor \cite{Ma1} and \cite{Ma2}. The cone metrics considered here define 
branched complex projective structures in the sense of Mandelbaum. More precisely, the cone metric on $\Sigma$ of 
constant negative curvature $-4$ such that the cone angle at each $x_i$ is $2\pi n_i$ (where $D=\Sigma_i n_ix_i$ is 
the divisor mentioned above) produces a branched complex projective structure on $\Sigma$ with branching divisor 
$\sum_i n_i x_i$.

\section{The Higgs bundles} \label{sec:Higgs}

\subsection{${\rm SL}(2,\mathbb C)$ and ${\rm SL}(2,\mathbb R)$ Higgs bundles}

\begin{definition}\label{def1}
Let $\Sigma$ be a compact connected Riemann surface of genus $g_\Sigma$, 
with $g_\Sigma\, \geq\, 2$. An ${\rm SL}(2,\mathbb C)$-Higgs bundle on $\Sigma$ is a pair 
$(E,\,\Phi)$, where $E$ is a rank 2 holomorphic bundle with trivial determinant on $\Sigma$
(meaning $\bigwedge^2 E\,=\, {\mathcal O}_\Sigma$) and 
$\Phi$ is holomorphic section of the vector bundle $End(E)\otimes K_{\Sigma}$, in
other words, $\Phi$ is a ${\mathcal O}_\Sigma$--linear map
$$
\Phi\,:\,E\,\longrightarrow \,E\otimes K_{\Sigma}\, .
$$
Here $K_{\Sigma}$ denotes the canonical bundle on $\Sigma$. The Higgs bundle $(E,\,\Phi)$ is 
called {\it semistable} (respectively, {\it stable}) if
$$
{\rm degree}(E') \,\leq\, 0 \ \ \text{(respectively,\ ${\rm degree}(E') \,<\, 0$)}
$$
for all invariant holomorphic line subbundles $E'\, \subset\, E$ that satisfy the
condition $\Phi(E')\, \subset\, E'\otimes K_{\Sigma}$.
The space of all $S$-equivalence classes of semistable Higgs bundles form a moduli space which
we denote by $\mathcal{M}({\rm SL}(2,\C))$.
\end{definition}

Here we are interested in the ${\rm SL}(2,\mathbb C)$-Higgs bundles of the form
\begin{equation}\label{SL2CHigss}
\left(E\,=\,L\oplus L^{-1},\;
\Phi\,=\, \begin{pmatrix}0& \alpha\\ \delta&0\end{pmatrix}\right)\, ,
\end{equation}
where $L$ is a holomorphic line bundle with ${\rm degree}(L)\,\ge \,0$, $\alpha\,\in\, H^0(\Sigma,\, L^2
\otimes K_{\Sigma})$ and $\delta\,\in \,H^0(\Sigma,\, L^{-2}\otimes K_{\Sigma})$.

Notice that the vector bundle $E$ in \eqref{SL2CHigss} admits an orthogonal structure defined by the non-degenerate form 
$Q\,=\,\begin{bmatrix}0&1\\1&0\end{bmatrix}$. The structure group of $E$ thus reduces from ${\rm SL}(2,\mathbb C)$ to 
${\rm SO}(2,\mathbb C)\,\simeq\,\mathbb C^*$. Equivalently we may say that $E$ is a rank two bundle associated to the
principal $\mathbb C^*$--bundle $L\setminus 0_\Sigma$ for the representation
\begin{equation}\label{el1}
\mathbb C^*\,\hookrightarrow \,{\rm SL}(2,\mathbb C)
\end{equation}
given by $z\,\longmapsto\, 
\begin{bmatrix}z&0\\0&z^{-1}\end{bmatrix}$. Furthermore, the Higgs field is symmetric with respect to
the form $Q$, or in other words,
$$Q(\Phi(v),\, w)\, =\, Q (v,\, \Phi(w))\, \in\, (K_\Sigma)_x$$ for all $v,\, w\, \in\, E_x$ and
all $x\, \in\, \Sigma$. As such, the defining data set for $(E,\,\Phi)$, i.e., $(L,\, (\alpha,\,
\delta))$, produces a Higgs bundle for the real form ${\rm SL}(2,\mathbb R)\,\subset\, {\rm SL}(2,\C)$.

The general definition of a principal $G$-Higgs bundle, where $G\,\subset\, G_{\mathbb C}$ is a real form of a reductive 
complex Lie group $G_{\mathbb C}$, can be found in \cite{BrGG}. The definition requires the following structures:
\begin{enumerate} 
\item A maximal compact subgroup $H\,\subset\, G$ and its complexification $H_{\mathbb C}$, and

\item the Cartan 
decomposition of the complex Lie algebra ${\rm Lie}(G_{\mathbb C})\,=\,{\rm Lie}(H_{\mathbb C})
\oplus \mathfrak{m}_{\mathbb C}$, which is the orthogonal decomposition with respect to the Killing form
on ${\rm Lie}(G_{\mathbb C})$.
\end{enumerate}
A $G$-Higgs bundle is then a pair $(P_{H_{\mathbb C}},\,\Phi)$, where $P_{H_{\mathbb C}}$ is a holomorphic principal 
$H_{\mathbb C}$--bundle on $\Sigma$
and $\Phi$ is a holomorphic section of $P_{H_{\mathbb C}}(\mathfrak{m}_{\mathbb C})\otimes 
K_{\Sigma}$ with $P_{H_{\mathbb C}}(\mathfrak{m}_{\mathbb C})$ being the holomorphic vector bundle associated to
$P_{H_{\mathbb C}}$ for the $H_{\mathbb C}$--module $\mathfrak{m}_{\mathbb C}$ given by the adjoint action.
In the case $G\,=\,{\rm SL}(2,\mathbb R)$, where $$H\,=\, {\rm SO}(2)\,\simeq\,{\rm U}(1),$$ we see that
$P_{H_{\mathbb C}}$ is a $\mathbb C^*$--bundle. In view of the
standard representation of $\mathbb C^*$, giving a $\mathbb C^*$--bundle is equivalent to giving a
holomorphic line bundle; let $L$ be the holomorphic line bundle corresponding to $P_{H_{\mathbb C}}$.
Then $$P_{H_{\mathbb C}}(\mathfrak{m}_{\mathbb C})\,=\,L^2\oplus L^{-2},$$ so the Higgs field $\Phi$ has two components,
namely $\alpha\,\in\, H^0(\Sigma,\, L^2\otimes K_{\Sigma})$ and $\delta\,\in\, H^0(\Sigma,\,L^{-2}\otimes K_{\Sigma})$.
We thus arrive at the following definition:

\begin{definition}
An ${\rm SL}(2,\mathbb R)$--{\it Higgs} bundle on $\Sigma$ is a triple $(L,\, (\alpha,\,\delta))$ where $L$ is a
holomorphic 
line bundle on $\Sigma$ while $\alpha$ and $\delta$ are holomorphic sections of $L^2\otimes K_{\Sigma}$ and 
$L^{-2}\otimes K_{\Sigma}$ respectively.
\end{definition}

\subsection{Moduli spaces}

There are good notions of semistability and stability
for $G$-Higgs bundles such that the $S$--equivalence classes of the semistable 
objects form an algebraic moduli space. Let $\mathcal{M}({\rm SL}(2,\mathbb R))$ be the moduli space of semistable
${\rm SL}(2,\mathbb R)$--Higgs bundles.
The inclusion map $${\rm SO}(2,\mathbb C)\,=\, {\mathbb C}^*\, \hookrightarrow\, {\rm SL}(2,\mathbb C)$$
in \eqref{el1} induces a finite map of moduli spaces (see Definition \ref{def1}) given by
\begin{align}
\mathcal{M}({\rm SL}(2,\mathbb R))&\,\longrightarrow\,\mathcal{M}({\rm SL}(2,\mathbb C))\\
[L,\, (\alpha,\,\delta)]&\,\longmapsto\, \left[L\oplus L^{-1},\;
\begin{pmatrix}0& \alpha\\ \delta&0\end{pmatrix}\right]\nonumber
\end{align}
The non-abelian Hodge correspondence (explained further in Section \ref{equations}) identifies 
$$
\mathcal{M}({\rm SL}(2,\C))\,\simeq\, {\rm Rep}(\pi_1(\Sigma),\, {\rm SL}(2,\mathbb C))\, ,
$$
where ${\rm Rep}(\pi_1(\Sigma),\, {\rm SL}(2,\mathbb C))$ denotes conjugacy classes of reductive representations
of $\pi_1(\Sigma)$ in ${\rm SL}(2,\mathbb C)$. 
This correspondence identifies
$$
\mathcal{M}({\rm SL}(2,\mathbb R))\,\simeq\, {\rm Rep}(\pi_1(\Sigma),\, {\rm SL}(2,\mathbb R))\, .
$$
While $\mathcal{M}({\rm SL}(2,\C))$ is connected, the moduli space
$\mathcal{M}({\rm SL}(2,\mathbb R))$ has multiple 
connected components, first enumerated for ${\rm Rep}(\pi_1(\Sigma),\,{\rm SL}(2,\mathbb R))$ by Goldman, \cite{Go},
and for $\mathcal{M}({\rm SL}(2,\mathbb R))$ by Hitchin \cite{Hi}. In the parlance of Higgs bundles, the
components are labeled by the degree 
of the line bundle $L$, subject to the Milnor-Wood inequality
\begin{equation}\label{MW}
|{\rm degree}(L)|\,\le\, g-1\, .
\end{equation}
The components with $|{\rm degree}(L)|\,< \,g-1$ are connected but there are $2^{2g}$ connected components with
$|{\rm degree}(L)|\,=\,g-1$. They correspond to the theta characteristics of $\Sigma$.

In the case ${\rm degree}(L) \,= \,g-1$, the semistability condition requires the component of the Higgs field in
$L^{-2}\otimes K_{\Sigma}$, i.e., $\delta$, to be non-trivial. This forces the ${\rm SL}(2,\mathbb R)$-Higgs bundles in
these components to be of the form $(K^{1/2}_{\Sigma},\, (\alpha,\, 1))$ where $K^{1/2}_{\Sigma}$ is a theta
characteristic, and $1$ denotes the unit constant section in $(K^{1/2}_{\Sigma})^{-2}\otimes K_{\Sigma}\,\simeq\,
\mathcal{O}_{\Sigma}$. The corresponding stable $\text{SL}(2,{\mathbb C})$--Higgs bundles are
of the form
\[\left(K^{1/2}_\Sigma\oplus K^{-1/2}_\Sigma,\;
\begin{pmatrix}0& \alpha\\ 1&0\end{pmatrix}\right)\, \]
where $\alpha\, \in\, H^0(\Sigma,\,K^{\otimes 2}_\Sigma)$ \cite{Hi}.
For each choice of $K^{1/2}_{\Sigma}$, the component in $\mathcal{M}({\rm SL}(2,\mathbb R))$ is thus
parameterized by the quadratic differentials $\alpha$, thereby identifying the component with
$H^0(\Sigma, \, K_{\Sigma}^2)\,\simeq\,{\mathbb C}^{3g-3}$.

In the non-maximal components, i.e., those for which $|{\rm degree}(L)|\,<\, g-1$, the stability condition 
still requires $\delta\,\ne\, 0$, but this condition no longer restricts the bundle $L^{-2}\otimes K_{\Sigma}$
to just one point in the appropriate Picard variety. The components are thus no longer topologically 
trivial spaces. In fact, they have the structure of a vector bundle over a symmetric product of the curve 
$\Sigma$ (see \cite{Hi}).

\subsection{Equations and correspondences}\label{equations}

The stability of $(L,\,(\alpha,\,\delta))$ is equivalent to the existence of a hermitian metric on $L$, say $l$, 
satisfying the Yang-Mills equation. Given a fixed metric on $\Sigma$ with K\"ahler form 
$\omega$, the Yang-Mills equation is
\begin{equation}\label{SL2Reqtn}
\sqrt{-1}\Lambda_{\omega}F_l+|\alpha|_l^2-|\delta|^2_l\,=\,0\, ,
\end{equation}
where $\Lambda_{\omega}$ denotes contraction with the K\"ahler form, $F_l $ denotes the curvature of the Chern connection determined by $l$, 
and the norms are with respect to the metric 
induced on $L^{\pm 2}\otimes K_{\Sigma}$ by $l$ and the metric on $\Sigma$. With respect to local holomorphic frames for 
$L$ and $K_{\Sigma}$, the metric is given by smooth positive function (also denoted by $l$) and the equation in
\eqref{SL2Reqtn} becomes
\begin{equation}\label{SL2Reqtnlocal}
\sqrt{-1}\Lambda_{\omega}(\partial(l^{-1}\overline{\partial}l))-l^2|\alpha|^2+l^{-2}|\delta|^2\,=\,0\, .
\end{equation}
The corresponding Yang-Mills equation for a metric $h$ on a stable ${\rm SL}(2,\mathbb C)$-Higgs bundle
$(E,\,\Phi)$ is
\begin{equation}\label{SL2Ceqtn}
\Lambda_{\omega}(F_h+[\Phi,\, \Phi^{*_h}])\,=\,0\, . 
\end{equation}
If $(E,\,\Phi)$ is of the form \eqref{SL2CHigss} then the solution has the form 
$h\,=\,\begin{bmatrix}l&0\\0&l^{-1}\end{bmatrix}$, so that with respect to a local holomorphic frame
$$\Phi^{*_h}\,=\,\begin{pmatrix}0& l^{-2}\overline{\delta}\\ l^2\overline{\alpha}&0\end{pmatrix}$$
and \eqref{SL2Ceqtn} reduces to \eqref{SL2Reqtnlocal}.

Denoting the Chern connection for $h$ by $\nabla_h$, the condition \eqref{SL2Ceqtn} implies that the connection
\begin{equation}\label{nabla+phi}
\nabla_h+\Phi+\Phi^{*_h}
\end{equation}
is flat, and hence it defines a representation of $\pi_1(\Sigma)$ in ${\rm SL}(2,\mathbb C)$. This leads to one
direction of the non-Abelian Hodge correspondence. To explain the other direction in the case of
${\rm SL}(2,\mathbb C)$-Higgs bundles, let $\mathbb E$ be a trivial determinant rank two bundle with a flat
structure defined by flat connection, say $\nabla_0$. Any hermitian metric on $\mathbb E$, say $h$, then
decomposes $\nabla_0$ as
\begin{equation}
\nabla_0\,=\,\nabla_h+\Phi+\Phi^{*_h},
\end{equation}
where the notation is as in \eqref{nabla+phi}. We thus get a Higgs bundle $(E,\,\Phi)$, where $E$ denotes the
holomorphic bundle defined by $\nabla_h^{0,1}$ on $\mathbb E$. Using the flat structure we can identify
\begin {equation}
E\,=\,\widetilde{\Sigma}\times_{\rho}\mathbb C^2,
\end{equation}
where $\widetilde{\Sigma}$ is the universal cover and $\rho$ denotes the holonomy representation defined by
$\nabla_0$. It follows that a hermitian metric on $E$ is equivalent to a $\rho$-equivariant map 
\begin{equation}
f\,:\,\widetilde{\Sigma}\,\longrightarrow\, {\mathbb H}^2\,=\, {\rm SL}(2,\mathbb C)/{\rm U}(2)\ .
\end{equation}
Moreover, if $f$ is harmonic then the metric satisfies the equation in \eqref{SL2Ceqtn}.

\subsection{Relation to hyperbolic structures}

If $L\,=\,K_{\Sigma}^{1/2}$, the solutions of the Higgs bundle equation for $(K_{\Sigma}^{1/2},\, (\alpha,\,1))$
are metrics 
on $K^{1/2}_{\Sigma}$. Each of these determines a positive-definite symmetric 2-tensor on the underlying surface 
which in local holomorphic coordinates is given by
$$
\alpha dz\otimes dz+(l^{-2}+l^2|\alpha|^2)dz\otimes d\overline{z}+\overline{\alpha}d\overline{z}\otimes d\overline{z}\, .
$$
Here by abuse of notation we write $\alpha\,=\,\alpha dz^2$. The non-degeneracy of these 2-tensors implies that they 
determine Riemannian metrics. If $\alpha\,=\,0$ this is the uniformizing hyperbolic structure on $\Sigma$. For the 
other Higgs bundles of the form $(K_{\Sigma}^{1/2},\, (\alpha,\,1))$, meaning for those with $\alpha\,\ne\,0$, the 
metrics are other hyperbolic structures on the surface underlying $\Sigma$. This identifies the Teichm\"uller space with 
a maximal component of $\mathcal{M}({\rm SL}(2,\mathbb R))$ and hence with $H^0(\Sigma,\, K_{\Sigma}^2)$. This 
recovers the fact that, fixing this given base point, the Hopf-type differentials $\alpha\, \in\, 
H^0(\Sigma,\,K^{\otimes 2}_\Sigma)$ parametrize the Fricke space of equivalence classes of marked hyperbolic metrics 
on $\Sigma$ \cite{Wo} (which by uniformization theorem identifies with the Teichm\"uller space ${\mathcal 
T}_{g_\Sigma}$ \cite{St}).

Inspired by the previous Hitchin's results in \cite{Hi}, the aim here is to give a similar description of 
uniformization for branched Riemann surfaces from the point of view of Higgs bundles. To that end we consider ${\rm 
SL}(2,\mathbb R)$-Higgs bundles with ${\rm degree}(L)\,<\,g-1$. In this case the solutions of \eqref{SL2Reqtn} 
define a non-negative symmetric 2-tensor on $\Sigma$ locally given by
\begin{equation}\label{degenhypstructure}
\alpha\delta dz\otimes dz+(l^{-2}|\delta|^2+l^2|\alpha|^2)dz\otimes
d\overline{z}+\overline{\alpha}\overline{\delta}d\overline{z}\otimes d\overline{z}\, .
\end{equation}

\subsection{Harmonic maps to hyperbolic space and Higgs bundles}\label{harmaps}

The space of positive definite hermitian $2\times 2$ matrices of determinant 1
\[{\mathcal H}\, :=\, \{A\,\in\, \mathrm{SL}(2,\C)\,\mid\, \overline{A}^t\,=\,A,\ A \, >\, 0\}\]
is identified with the symmetric space $\mathrm{SL}(2,\C)/\mathrm{SU}(2)$ using the polar decomposition
\[\mathrm{SL}(2,\C)\,=\, {\mathcal H}\mathrm{SU}(2)\]
of $\mathrm{SL}(2,\C)$. The corresponding Riemannian metric is given by the quadratic form
\[q(Y)\,:=\, -2\det(Y)\]
on $\mathfrak{gl}(2,\C)$, which produces a Riemannian metric on $\mathcal H$. 
Note that $-2\text{det}(Y)\,=\, \text{tr}(Y^2)$ for 
any trace-free hermitian matrix $Y\,\in\, \mathfrak{gl}(2,\C)$.

Let $\Sigma$ be a compact connected Riemann surface of genus $g_\Sigma$, with
$g_\Sigma\, \geq\, 2$. Fix a base point $x_0\, \in\, \Sigma$. Let
$$
p\, :\, \widetilde{\Sigma}\, \longrightarrow\, \Sigma
$$
be the corresponding universal cover.

Take an irreducible representation
\begin{equation}\label{e1}
\rho\, :\, \pi_1(\Sigma,\, x_0)\, \longrightarrow\,\mathrm{SL}(2,\C)\, .
\end{equation}
Let
$$f\,:\,\widetilde{\Sigma}\, \longrightarrow\, {\mathcal H}$$ 
be the corresponding $\pi_1(\Sigma,\, x_0)$--equivariant harmonic map \cite{Co}. We describe below the geometric construction of the harmonic map $f$.

Consider the $C^{\infty}$ trivial rank two vector bundle
$$\underline{\C}^2\, :=\, \Sigma\times \C^2\, \longrightarrow\,\Sigma$$
equipped with the standard (constant) hermitian structure $h_0$. Let
$(\overline{\partial}^0,\, \Phi)$ be a Higgs bundle structure on $\underline{\C}^2$
corresponding to $\rho$; so $\overline{\partial}^0$ is a Dolbeault operator on
$\underline{\C}^2$, and $\Phi$ is a Higgs field on the holomorphic vector bundle
$(\underline{\C}^2,\, \overline{\partial}^0)$, such that the Higgs bundle
$(\underline{\C}^2,\, \overline{\partial}^0,\, \Phi)$ corresponds to $\rho$. This means that 
$(\underline{\C}^2,\, \overline{\partial}^0,\, \Phi, \, h_0)$ solves \eqref{SL2Ceqtn}
such that the representation $\rho$ is the monodromy representation of the flat connection
$\nabla+\Phi+\Phi^{* _{h_0}},$ where 
 $\nabla$ is the Chern connection on the
holomorphic hermitian bundle $(\underline{\C}^2,\, \overline{\partial}^0, \, h_0)$.
 Since
$\rho$ is irreducible, $(\overline{\partial}^0,\, \Phi)$ is uniquely
determined by $\rho$.

For all $\lambda\, \in\, \C^*$, let
\begin{equation}\label{eq:associatedfamily}
\nabla^\lambda\,=\, \nabla+\lambda^{-1}\Phi +\lambda\Phi^* 
\,=\, d+\xi^\lambda=d+\lambda^{-1}\xi_{-1}+\xi_0+\lambda\xi_1
\end{equation}
be the family of flat ${\rm SL}(2,\C)$--connections; here
$$\xi_{-1}\,\in\,\Omega^{(1,0)}(\Sigma,\mathfrak{sl}(2,\C)),\ \
\xi_{1}\,\in\,\Omega^{(0,1)}(\Sigma,\mathfrak{sl}(2,\C)),\ \
\xi_{0}\,\in\,\Omega^1(\Sigma,\mathfrak{su}(2,\C))$$
with
\[
d\xi^\lambda+\xi^\lambda\wedge\xi^\lambda\,=\,0\, 
\]
and
\[\overline{\partial}^0=d''+\xi_0^{0,1}.\]
The above family $\xi^\lambda$ satisfies the following reality condition:
\begin{equation}\label{e2}
\overline{\xi_{-1}}^t\,=\,\xi_1\quad \text{ and } \quad \overline{\xi_0}^t\,=\, -\xi_0\, .
\end{equation}
From \eqref{e2} it follows that
\begin{equation}\label{eq:omegareal}
-\xi^{-1/\overline{\lambda}} \,= \,\overline{\xi^\lambda}^t\, . 
\end{equation}
Consider a parallel frame on the universal covering $\widetilde{\Sigma}
\,\longrightarrow\,\Sigma$
\begin{equation}\label{pf}
{\widehat F}\,\colon\,\C^*\times\widetilde{\Sigma}
\,\longrightarrow\,{\rm SL}(2,\C)
\end{equation}
for the pull-back of $(\underline{\C}^2,\,\nabla^\lambda)$ on $\widetilde{\Sigma}$.
With the notation ${\widehat F}^\lambda\,=\, {\widehat F} (\lambda, \cdot)$, we have 
\begin{equation}\label{eq:Flambdaparallel}
d{\widehat F}^\lambda\,=\,-\xi^\lambda {\widehat F}^\lambda\, .
\end{equation}
In particular, we have
\begin{equation}\label{diffframeeq}
dF\,=\, -\xi^1 F\,\quad\,\text{ and } \quad\, d(\overline{F}^t)\,=\,\overline{F}^t \xi^{-1}\, ,
\end{equation}
where $\xi^1$ and $\xi^{-1}$ are $\xi^{\lambda}$ for $\lambda\,=\,1$ and
$\lambda\,=\,-1$ respectively, and 
\begin{equation}\label{pf2}
F\,=\,\widehat{F}^1
\end{equation}
(see \eqref{eq:Flambdaparallel}). Notice that the second equation in \eqref{diffframeeq} comes from the fact that $\overline{\widehat{F}^{\lambda}}^t$ is a parallel frame for the flat connection
$\overline{{\nabla}^{\lambda}}^t$, equivalently $\overline{{\nabla}^{\lambda}}^t\overline{\widehat{F}^{\lambda}}^t \, = \, 0.$

We call $\widehat F$ the \textit{extended frame}.

\begin{lemma}\label{theharmonicmap}
Let \[((d+\xi_0)'',\, \xi_{-1})\] be a Higgs pair 
solving the Higgs bundle equation \eqref{SL2Ceqtn}
for the trivial hermitian structure $h_0$.
Then, the $\rho$ equivariant harmonic map corresponding to this Higgs pair is
\[f\, =\, \overline{F}^t F\,\colon\,\widetilde{\Sigma}\,\longrightarrow\, {\mathcal H}\, ,\]
where $F$ is the map in \eqref{pf2}.
\end{lemma}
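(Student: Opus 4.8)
The plan is to verify that $f=\overline{F}^t F$ takes values in $\mathcal H$, is $\rho$--equivariant, and is harmonic; since $\rho$ is irreducible the equivariant harmonic map attached to the Higgs pair is unique \cite{Co}, so establishing these three properties identifies $f$ with it. The first point is immediate: as $F\,\in\,\mathrm{SL}(2,\C)$ we have $\det(\overline{F}^t F)=1$, while $(\overline{F}^t F)^*=\overline{F}^t F$ and $v^*\overline{F}^t F\,v=|Fv|^2>0$ for $v\neq 0$, so $\overline{F}^t F$ is positive definite Hermitian of determinant one and hence lies in $\mathcal H$.

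For equivariance I would use that $\xi^1$ is pulled back from $\Sigma$, hence invariant under the deck action of $\pi_1(\Sigma,x_0)$. Thus for $\gamma\in\pi_1$ both $F$ and $F\circ\gamma$ solve \eqref{diffframeeq}, so they differ by right multiplication by a constant $C_\gamma\in\mathrm{SL}(2,\C)$; by construction $\gamma\mapsto C_\gamma$ is the monodromy of $\nabla^1=\nabla+\Phi+\Phi^*$, i.e.\ (a representation equivalent to) $\rho$. Then $f\circ\gamma=(FC_\gamma)^*(FC_\gamma)=C_\gamma^*\,f\,C_\gamma$, which is precisely the isometric $\mathrm{SL}(2,\C)$--action on $\mathcal H\cong\mathrm{SL}(2,\C)/\mathrm{SU}(2)$ under the identification $g\mapsto \overline g^t g$. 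Hence $f$ intertwines the deck action with $\rho$, so it is $\rho$--equivariant.

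The main work is harmonicity. First I would compute $f^{-1}df$ using \emph{both} identities in \eqref{diffframeeq}: from $dF=-\xi^1F$ and $d(\overline F^t)=\overline F^t\xi^{-1}$ one gets $df=\overline F^t(\xi^{-1}-\xi^1)F$, and since $\xi^{-1}-\xi^1=-2(\xi_{-1}+\xi_1)$ this gives $\omega:=f^{-1}df=-2\,F^{-1}(\xi_{-1}+\xi_1)F$, whose $(1,0)$ and $(0,1)$ parts are $-2F^{-1}\xi_{-1}F$ and $-2F^{-1}\xi_1F$. A first--variation computation for the energy $\int_\Sigma\mathrm{tr}(f^{-1}f_z\,f^{-1}f_{\bar z})\,dz\wedge d\bar z$ (in which the nonlinear bracket terms cancel) shows that $f$ is harmonic if and only if $d{*}\omega=0$, i.e.\ $\partial_{\bar z}(f^{-1}f_z)+\partial_z(f^{-1}f_{\bar z})=0$; equivalently, this follows because $\mathcal H$ embeds totally geodesically in $\mathrm{SL}(2,\C)$ through $g\mapsto\overline g^t g$.

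It then remains to verify $d{*}\omega=0$. Writing $\eta=\xi_{-1}-\xi_1$, the Hodge star on the Riemann surface gives $*\omega=2i\,F^{-1}\eta F$, and differentiating (again via \eqref{diffframeeq}, using $dF^{-1}=F^{-1}\xi^1$) yields
\[
d{*}\omega\,=\,2i\,F^{-1}\bigl(d\eta+\xi^1\wedge\eta+\eta\wedge\xi^1\bigr)F\, .
\]
Expanding $\xi^\lambda=\lambda^{-1}\xi_{-1}+\xi_0+\lambda\xi_1$ and collecting powers of $\lambda$ in the flatness identity $d\xi^\lambda+\xi^\lambda\wedge\xi^\lambda=0$ (which holds for every $\lambda$, being equivalent to the Higgs equation \eqref{SL2Ceqtn} together with the holomorphicity of $\Phi$) gives $\xi_{-1}\wedge\xi_{-1}=0=\xi_1\wedge\xi_1$ from $\lambda^{\mp2}$ and $d\xi_{\mp1}+\xi_{\mp1}\wedge\xi_0+\xi_0\wedge\xi_{\mp1}=0$ from $\lambda^{\mp1}$. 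Substituting these into $d\eta+\xi^1\wedge\eta+\eta\wedge\xi^1$, the pure terms $\xi_{\pm1}\wedge\xi_{\pm1}$ drop, the mixed terms $\xi_{-1}\wedge\xi_1$ and $\xi_1\wedge\xi_{-1}$ cancel pairwise, and the two remaining groups vanish by the $\lambda^{\mp1}$ identities; hence $d{*}\omega=0$ and $f$ is harmonic. The one delicate point is the bookkeeping of the type decomposition and the reality condition \eqref{e2}, so that the two identities in \eqref{diffframeeq} combine into $df=\overline F^t(\xi^{-1}-\xi^1)F$ and the star operator converts the zero-curvature coefficients into exactly the divergence of $\omega$; once the $\lambda$-expansion is in hand, the cancellations are forced.
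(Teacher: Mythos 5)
Your proof is correct, and it shares the paper's skeleton: verify that $f=\overline{F}^t F$ is harmonic and $\rho$-equivariant, then conclude by uniqueness of the equivariant harmonic map attached to an irreducible representation. The genuine difference lies in how harmonicity is established. The paper's proof is geometric and terse: it identifies $f^*(T{\mathcal H})$ with the pull-back of $\mathrm{Sym}_{h_0}(\underline{\C}^2)$, observes that under this identification $df$ is the pull-back of $\Phi+\Phi^{*_{h_0}}$ and the pulled-back Levi-Civita connection is the Chern connection $\nabla_{h_0}$, and then gets harmonicity from $d^*_{\nabla_{h_0}}(df)=d^*_{\nabla_{h_0}}(\Phi+\Phi^{*_{h_0}})=0$, which holds because a solution of \eqref{SL2Ceqtn} has holomorphic $\Phi$ (and anti-holomorphic $\Phi^{*_{h_0}}$). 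You instead work with the Lie-group-valued formulation: since ${\mathcal H}$ sits totally geodesically in ${\rm SL}(2,\C)$ with its bi-invariant metric, harmonicity is equivalent to $d{*}(f^{-1}df)=0$, and you verify this by computing $f^{-1}df=-2F^{-1}(\xi_{-1}+\xi_1)F$ from both identities in \eqref{diffframeeq} and then expanding the flatness of the family $d+\xi^\lambda$ of \eqref{eq:associatedfamily} in powers of $\lambda$; the $\lambda^{\mp1}$ coefficients are exactly what makes $d\eta+\xi^1\wedge\eta+\eta\wedge\xi^1$ vanish (the $\lambda^{\mp2}$ terms being trivially zero, and the mixed terms cancelling pairwise as you note). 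This is the integrable-systems (extended-frame) route, and it is precisely the formalism the paper sets up in Section \ref{harmaps} but does not fully exploit in its own proof. What your version buys is a self-contained, checkable computation that makes explicit which components of the zero-curvature equation are used, at the cost of Hodge-star and sign bookkeeping; what the paper's version buys is brevity and a conceptual explanation of why the tension field of $f$ is literally the covariant codifferential of $\Phi+\Phi^{*_{h_0}}$, though it leaves the bundle identifications to the reader. Both arguments close identically, via equivariance for the same action $A\cdot g=\overline{g}^t A g$ (your hedge about the convention ambiguity in identifying $\gamma\mapsto C_\gamma$ with $\rho$ is appropriate) and the uniqueness from \cite{Co}, where irreducibility of $\rho$ guarantees the equivariant harmonic map is unique up to the centralizer $\pm I$, which acts trivially on ${\mathcal H}$.
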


\begin{proof}
It is a straightforward computation to show that the map $f\, =\, \overline{F}^t F$ is harmonic. Indeed, with the notations as in (\ref{nabla+phi}), the differential of $f$ coincides with the pull-back to $\widetilde{\Sigma}$ of $\Phi+\Phi^{* _{h_0}}$, 
where $f^*(T\mathcal H)$ is identified with the pull-back of $\text{Sym}_{h_0}(\underline{\C}^2)$. Moreover, the latter identification intertwines the pull-back through $f$ of the Levi-Civita connection of $\mathcal H$ with the pull-back
of the Chern connection $\nabla_{h_0}$ in (\ref{nabla+phi}). Equation (\ref{SL2Ceqtn}) implies $$d_{\nabla_{h_0}}^*(df)\,= \,d_{\nabla_{h_0}}^*(\Phi+\Phi^{* _{h_0}})\,=0$$
which characterizes the fact that $df$ (and hence $f$) is harmonic.

The map $f$ is equivariant
with respect to the $\mathrm{SL}(2,\C)$ representation $\rho$ of $\pi_1(\Sigma,\, x_0)$ and the 
action 
\[ A\cdot g\,:=\,\overline{g}^t A g\]
of $g\,\in\,\mathrm{SL}(2,\C)$ on $A\,\in\,\mathcal H.$ The lemma now
follows from the uniqueness of the equivariant harmonic map corresponding to a Higgs bundle.
\end{proof}

\begin{remark}\label{remgauge}
Note that for a unitary gauge $g\,\colon\,\Sigma\,\longrightarrow\,\mathrm{SU}(2)$ and
\[\widetilde{\nabla}^\lambda\,=\, g\circ \nabla^\lambda\circ g^{-1}\]
we have the extended frame
\[ \widetilde{F}\,=\,g{\widehat F}\, .\]
This yields the same harmonic map
\[\overline{\widetilde{F}^{1}}^t \widetilde{F}^{1}\,=\,\overline{F}^t\overline{g}^t g F
\,=\,\overline{F}^t F\, .\]
\end{remark}

\section{Conical constant negative curvature metrics via Higgs bundles}

Let $D$ be an effective divisor on $\Sigma$ of even degree. We have $ {\mathcal O}_\Sigma\, 
\subset\, {\mathcal O}_\Sigma(D)$, and the canonical section of ${\mathcal O}_\Sigma(D)$ given by the 
constant function $1$ on $\Sigma$ will be denoted by $\delta$. So the divisor for $\delta$ is $D$.

Let $L$ be a holomorphic line bundle
on $\Sigma$ such that
\begin{equation}\label{g1}
L^{\otimes 2}\,=\, K_\Sigma\otimes {\mathcal O}_\Sigma(-D)\, .
\end{equation}
Our ${\rm SL}(2,\mathbb R)$-Higgs bundles are of the form
\begin{equation}
(L,\,(\alpha,\,\delta))\, ,
\end{equation}
where $\alpha\,\in\, H^0(\Sigma,\,K^{\otimes 2}_\Sigma\otimes {\mathcal O}_\Sigma(-D))$, and
\[\delta\,\in\, H^0(\Sigma, \,K_\Sigma\otimes (K_\Sigma\otimes {\mathcal O}_\Sigma(-D))^*)
\,=\,H^0(\Sigma,\, {\mathcal O}_\Sigma(D))\]
is defined above. Viewed as an ${\rm SL}(2,\mathbb C)$-Higgs bundle this has the form
\[E\,=\,L\oplus L^*,\,\ \Phi\,=\, \begin{pmatrix}0&\alpha\\ \delta &0\end{pmatrix}\, .\]

In order to analyze the behavior at the zeros of $\delta$, i.e., at the support of $D$, we consider 
a local gauge $g$ such that $\xi_0$ is diagonal and $\xi_{\pm 1}$ are off-diagonal.

With respect to a local holomorphic coordinate $z$ on $U\,\subset\, \Sigma$ 
the connection 1-forms are given (in the local gauge $\underline \C^2\,=\,L\oplus L^\perp$) as follows:
\begin{equation}\label{localnormalform}
\begin{split}
\Phi&\,=\,\xi_{-1}\,=\,\begin{pmatrix} 0 & a \\ d &0\end{pmatrix} dz\\
\Phi^*&\,=\,\xi_{1}\,=\,\begin{pmatrix} 0 &\overline{d}\\ \overline{a} &0\end{pmatrix}d\overline{z}\\
\xi_0&\,=\,\begin{pmatrix} c &0\\ 0&-c\end{pmatrix}dz-\begin{pmatrix} \overline{c} &0 \\ 0 &-\overline{c}\end{pmatrix}d
\overline{z}\\
\end{split}
\end{equation}
for functions $a,\,d,\, c\,\colon\, U\,\longrightarrow\, \C$. We note that these are not holomorphic
functions as we have a conjugation by a unitary gauge: for instance, if $l$ is a real function defining the metric on $L$ given by \eqref{SL2Reqtnlocal}\ , then the expression of $\Phi^*$ in $(\ref{localnormalform})$ is 
$\begin{pmatrix} 0 & l^{-2} \overline{d}\\ l^2\overline{a} &0\end{pmatrix}d\overline{z}$.

 However, the functions $a, \, d, \, c$ satisfy a $\overline{\partial}$-equation,
in particular, their order of vanishing at points is well-defined. Also note 
that $\alpha$ and $\delta$ are locally determined by $a$ and $d$.

\begin{proposition}\label{h2subseth3}
The equivariant harmonic map $f$ associated to a ${\rm SL}(2,\mathbb R)$-Higgs bundle takes values in a hyperbolic 2-space.
In particular, the induced symmetric 
bilinear form $\text{tr}((\Phi_p+\Phi_p^*)^2)$ of the map $f$ yields the constant negative curvature metric provided it 
does not degenerate.
\end{proposition}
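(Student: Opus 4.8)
The plan is to establish the two assertions in turn: first that the image of $f$ lies in a totally geodesic copy of $\mathbb{H}^2$ inside $\mathcal H\cong\mathbb{H}^3={\rm SL}(2,\C)/{\rm SU}(2)$, and then that the induced symmetric $2$-tensor is a constant negative curvature metric wherever it is non-degenerate.

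For the first assertion I would exploit the reality of the monodromy. Under the non-abelian Hodge correspondence an ${\rm SL}(2,\mathbb R)$-Higgs bundle corresponds to a representation $\rho$ taking values in ${\rm SL}(2,\mathbb R)\subset {\rm SL}(2,\C)$, so $\overline{\rho(\gamma)}=\rho(\gamma)$ for every $\gamma$. Consider the entrywise conjugation $\tau\colon\mathcal H\to\mathcal H$, $\tau(A)=\overline A$. Since a positive definite Hermitian matrix satisfies $\overline A=A^t$, one checks directly that $\tau$ is a well-defined involution of $\mathcal H$ preserving the form $q(Y)=-2\det(Y)=\text{tr}(Y^2)$, hence an isometry, and that its fixed-point set is the set of real symmetric positive definite determinant-one matrices, namely the totally geodesic $\mathbb{H}^2={\rm SL}(2,\mathbb R)/{\rm SO}(2)$ (fixed-point sets of isometric involutions being totally geodesic). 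Using $\overline{\rho(\gamma)}=\rho(\gamma)$ and the equivariance $f(\gamma\cdot\widetilde x)=\overline{\rho(\gamma)}^t f(\widetilde x)\rho(\gamma)$ for the action $A\cdot g=\overline g^t A g$, a short manipulation shows that $\tau\circ f$ is again $\rho$-equivariant; it is harmonic because $\tau$ is an isometry. By the uniqueness of the equivariant harmonic map attached to $\rho$ (the same uniqueness invoked in Lemma \ref{theharmonicmap}) we get $\tau\circ f=f$, so $f$ takes values in $\mathrm{Fix}(\tau)=\mathbb{H}^2$. This reality is also visible at the level of the extended frame: the constant gauge $J=\mathrm{diag}(1,-1)$ flips the off-diagonal parts $\xi_{\pm1}$ and fixes the diagonal part $\xi_0$ in \eqref{localnormalform}, so that $J\xi^\lambda J^{-1}=\xi^{-\lambda}$, the extra symmetry in $\lambda$ that characterizes the ${\rm SL}(2,\mathbb R)$-reduction and forces $f=\overline F^t F$ to be real symmetric.

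For the second assertion I would combine a direct computation with a soft curvature argument. By Lemma \ref{theharmonicmap} the differential $df$ is the pull-back of $\Phi+\Phi^*$ and the metric of $\mathcal H$ is $q(Y)=\text{tr}(Y^2)$, so the induced tensor is $f^{*}g=\text{tr}((\Phi+\Phi^*)^2)$; expanding it in the gauge \eqref{localnormalform} reproduces exactly the tensor \eqref{degenhypstructure}. The induced metric on the totally geodesic $\mathbb{H}^2$ has constant curvature $-4$ for this normalization, by a direct computation from $q$. Since $f$ is now a map between two real manifolds of the same dimension, non-degeneracy of $f^{*}g$ is equivalent to the injectivity of $df$, hence to $f$ being a local diffeomorphism onto an open subset of $\mathbb{H}^2$; pulling back a metric of constant curvature $-4$ along a local diffeomorphism gives a metric of constant curvature $-4$, which is the assertion. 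I expect the only genuinely non-trivial step to be the reduction of the image to $\mathbb{H}^2$ via the uniqueness of the equivariant harmonic map together with the isometric involution $\tau$; the identification of $f^{*}g$ with \eqref{degenhypstructure} is a routine expansion, and the final constancy of the curvature is merely the diffeomorphism invariance of sectional curvature, the non-degeneracy being a hypothesis here rather than something to be proved.
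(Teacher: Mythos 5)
Your proposal is correct in substance and reaches the conclusion by a genuinely different route from the paper's. The paper works directly with the extended frame: the unitary gauge transformation $T=\mathrm{diag}(\sqrt{-1},-\sqrt{-1})$ satisfies $T\xi^{\lambda}T^{-1}=\xi^{-\lambda}$ (your observation about $J=\mathrm{diag}(1,-1)$ is the same fact, since $T=\sqrt{-1}J$ acts by the same conjugation), whence $(\overline{F}^t)^{-1}=TFT^{-1}$, and a direct matrix computation then shows that $f=\overline{F}^tF$ has equal diagonal entries; the set of such matrices in $\mathcal H$ is a totally geodesic hyperbolic plane, and gauge invariance (Remark \ref{remgauge}) finishes the argument. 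You instead use a soft symmetry principle: an isometric involution of $\mathcal H$ whose fixed locus is totally geodesic, together with the uniqueness of the equivariant harmonic map for an irreducible representation. That is a legitimate and arguably cleaner argument, and your handling of the second assertion (image in a totally geodesic plane plus local diffeomorphism wherever the pullback is nondegenerate, hence constant curvature $-4$) is exactly the intended content of the ``in particular''; note that this step genuinely needs the first assertion, since pulling back the $\mathbb H^3$ metric by an immersion would not give constant curvature.

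One imprecision should be repaired. For the specific map $f=\overline{F}^tF$ of Lemma \ref{theharmonicmap} --- built in the unitary gauge in which the harmonic metric on $L\oplus L^{-1}$ is the standard one --- the monodromy $\rho$ of $\nabla+\Phi+\Phi^{*}$ is \emph{not} literally real-valued. The flat connection form in \eqref{localnormalform} satisfies $Q\overline{\xi^{1}}Q=\xi^{1}$ with $Q=\begin{pmatrix}0&1\\1&0\end{pmatrix}$ (combine the orthogonal structure with the harmonic metric), so $\rho$ satisfies the twisted reality $\overline{\rho(\gamma)}=Q\rho(\gamma)Q$, i.e., it takes values in the conjugate real form $\mathrm{SU}(1,1)$ rather than in $\mathrm{SL}(2,\mathbb R)$ itself. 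Accordingly, entrywise conjugation $\tau(A)=\overline{A}$ must be replaced by the twisted involution $\tau_Q(A)=Q\overline{A}Q$ (still an isometric involution of $\mathcal H$), whose fixed locus is precisely the set of hermitian matrices with equal diagonal entries --- the plane the paper exhibits --- and not the real symmetric matrices; for the same reason your side remark that the $\lambda\mapsto-\lambda$ frame symmetry ``forces $f$ to be real symmetric'' is inaccurate as stated. Alternatively, conjugate $\rho$ into $\mathrm{SL}(2,\mathbb R)$ before running your argument; this changes $f$ only by an isometry of $\mathcal H$, which is harmless because the proposition asserts only that the image lies in \emph{some} hyperbolic $2$-space. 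With either one-line repair your proof is complete.
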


\begin{proof}
Consider the associated family $d+\xi^\lambda$ of flat connections \eqref{eq:associatedfamily} and the extended frame
$\widehat F$ on an ${\rm SL}(2,\mathbb R)$-Higgs bundle as above. For $F
\,=\,\widehat{F}^{1}$, Lemma \ref{theharmonicmap} proves that the map 
$f\, =\, \overline{F}^t F\,\colon\,\widetilde{\Sigma}\,\longrightarrow\, {\mathcal H}$ to the $3$-hyperbolic space $\mathcal H$ is harmonic.

Here the connections $d+\xi^{1}$ and $d+\xi^{-1}$ are gauge equivalent by the gauge 
transformation $T$ which is diagonal with respect to $L\oplus L^\perp$ with eigenvalues $\pm \sqrt{-1}$. 

We obtain that $(\overline{F}^t)^{-1}\,=\,TFT^{-1}$; compare it with \eqref{diffframeeq}. A direct computation shows that 
the harmonic map $f\,=\,{\overline F}^tF$ takes values in the two dimensional submanifold of $\mathcal H$ formed by 
hermitian matrices with equal diagonal entries. When endowed with the restriction of the hyperbolic metric of 
$\mathcal H$, this two-dimensional manifold is a totally geodesic hyperbolic plane in $\mathcal H$.

Remark \ref{remgauge} shows that the image of the map $f$ is invariant by the unitary gauge transformation $T$ and 
terminates the proof.
\end{proof}

Proposition \ref{h2subseth3} and its proof are analogous to \cite[Proposition 1.9]{Hi2} for harmonic 
maps into a totally geodesic 2-sphere $S^2\,\subset\, S^3$ inside the 3-sphere. 

Consider the symmetric $2$-tensor
\begin{equation}\label{eq-conf-class}
\begin{split}
\text{tr}((\Phi_x+\Phi_x^*)^2)&\,=\,
ad\ dz\otimes dz+(l^{-2}|d|^2+l^2|a|^2)dz\otimes d\overline{z}+\overline{a}\overline{d}d\overline{z}\otimes
d\overline{z}\\
&=\,l^{-2}( d\, dz+l^2 \overline{a} \, d\overline{z})(\overline{d}\, d\overline{z}+l^2 a \, dz)\, ,
\end{split}
\end{equation}
where $l$ is a real function defining the metric on $L$ given by \eqref{SL2Reqtnlocal}
while $a$ and $d$ are as in \eqref{localnormalform}. A short computation shows (compare also with the proof
of \cite[Theorem 11.2]{Hi}) that the symmetric bilinear form in \eqref{eq-conf-class} is positive definite at a 
point $x\,\in\, U$ if and only if
\begin{equation}\label{eq-non-deg-met}
l^{-2}(x)|d(x)|^2\,\neq\, l^2(x)|a(x)|^2\, .
\end{equation}

\begin{lemma}\label{lem:metdeg}
Assume that there exist points $p,\,q\,\in\, \Sigma$ such that
\[{\rm ord}_p\delta\,<\,{\rm ord}_p\alpha\,\quad { and }\quad\, {\rm ord}_q\alpha
\,<\, {\rm ord}_q\delta\, ,\]
where ${\rm ord}$ denotes the order of vanishing.
Then, the induced bilinear form ${\rm tr}((\Phi_p+\Phi_p^*)^2)$ fails to be positive
definite somewhere in $\Sigma\setminus{\rm supp}(D)$.
\end{lemma}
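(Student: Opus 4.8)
The plan is to recast the non-degeneracy criterion \eqref{eq-non-deg-met} as the non-vanishing of a single globally defined continuous function on $\Sigma$ whose sign can be read off near $p$ and near $q$ from the order-of-vanishing hypotheses, and then to conclude by the connectedness of $\Sigma\setminus\mathrm{supp}(D)$ together with the intermediate value theorem.

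First I would fix the Higgs bundle metric $l$ solving \eqref{SL2Reqtn}, which is smooth and strictly positive on all of $\Sigma$. Using $l$ and the fixed metric on $\Sigma$, form the pointwise norm functions $|\alpha|^2$ and $|\delta|^2$ of the holomorphic sections $\alpha\in H^0(\Sigma,\, L^2\otimes K_\Sigma)$ and $\delta\in H^0(\Sigma,\, L^{-2}\otimes K_\Sigma)$; these are smooth on all of $\Sigma$, including the points of $D$, since $\alpha$ and $\delta$ are genuine holomorphic sections and the metrics are smooth. In a local holomorphic frame these norms equal, respectively, $l^2|a|^2$ and $l^{-2}|d|^2$ times a common positive factor (the norm of $dz$ in the metric on $K_\Sigma$), so by \eqref{eq-conf-class} and \eqref{eq-non-deg-met} the form $\mathrm{tr}((\Phi+\Phi^*)^2)$ is positive definite at a point of $\Sigma\setminus\mathrm{supp}(D)$ if and only if the continuous function
\[ G\,:=\,|\alpha|^2-|\delta|^2 \]
is nonzero there. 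It therefore suffices to produce a zero of $G$ inside $\Sigma\setminus\mathrm{supp}(D)$.

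Next I would determine the sign of $G$ near $p$ and near $q$. Since $l$ and the metric on $\Sigma$ are smooth and positive, the function $|\alpha|^2$ (respectively, $|\delta|^2$) vanishes to order $2\,\mathrm{ord}\,\alpha$ (respectively, $2\,\mathrm{ord}\,\delta$) at each point, so on a small punctured neighborhood the section with the smaller order of vanishing dominates. The hypothesis $\mathrm{ord}_p\delta<\mathrm{ord}_p\alpha$ then gives $G<0$ on a punctured neighborhood of $p$ (and at $p$ itself when $p\notin\mathrm{supp}(D)$), while $\mathrm{ord}_q\alpha<\mathrm{ord}_q\delta$ gives $G>0$ on a punctured neighborhood of $q$. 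As $\mathrm{supp}(D)$ is finite, I can pick points $p',\,q'\in\Sigma\setminus\mathrm{supp}(D)$, arbitrarily close to $p$ and $q$, with $G(p')<0<G(q')$.

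Finally, $\Sigma\setminus\mathrm{supp}(D)$ is connected, being the complement of finitely many points in a connected surface, and $G$ is continuous there and takes values of both signs. By the intermediate value theorem $G(r)=0$ for some $r\in\Sigma\setminus\mathrm{supp}(D)$, and at $r$ the criterion \eqref{eq-non-deg-met} becomes an equality, so the induced form fails to be positive definite at $r$. The one point that needs genuine care---the main obstacle---is this initial reduction: verifying that the form degenerates exactly on the zero set of a single well-defined continuous function, and that the smooth positive factors $l^{\pm2}$ and the $K_\Sigma$-metric do not disturb the comparison of orders of vanishing. Once that is established, the sign analysis and the connectedness argument are routine.
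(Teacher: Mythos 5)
Your proof is correct, and at its core it is the same argument as the paper's: positive definiteness of the $2$-tensor is governed by the non-vanishing of the difference of the two terms in \eqref{eq-non-deg-met}, the order-of-vanishing hypotheses force this difference to have opposite signs near $p$ and near $q$, and continuity then forces a zero in between. The paper's own proof is far terser: it places $p$ and $q$ in a single connected coordinate chart $U$ carrying a unitary gauge in which \eqref{localnormalform} holds, and simply asserts that ``by continuity'' the inequality $l^{-2}|d|^2\neq l^2|a|^2$ cannot hold on all of $U$, leaving the sign analysis implicit. Your globalization --- replacing the gauge-dependent quantities $l^{-2}|d|^2$ and $l^{2}|a|^2$ by the intrinsic norms of the holomorphic sections $\delta$ and $\alpha$, which differ from them by a common positive smooth factor coming from the metric on $K_\Sigma$ --- buys two concrete improvements. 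First, you avoid having to produce one coordinate-and-gauge chart containing both points. Second, and more substantively, by choosing $p',q'\notin\mathrm{supp}(D)$ with $G(p')<0<G(q')$ and running the intermediate value theorem along a path in the connected set $\Sigma\setminus\mathrm{supp}(D)$, you locate the degeneracy point \emph{away from} $\mathrm{supp}(D)$, which is exactly what the lemma asserts; the paper's argument as written only produces a failure point somewhere in $U$, and the localization off $\mathrm{supp}(D)$ is left to the reader. So your write-up is a more complete rendering of the same strategy rather than a different proof.
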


\begin{proof}
There exists an open and connected subset $U\,\subset\, \Sigma$ with $p,\,q\,\in\, U$, and a holomorphic
coordinate $z\,\colon\, U\,\longrightarrow\, \mathbb C$, and a unitary gauge $g$
on $U$ such that the connection takes the form given in \eqref{localnormalform}.
By continuity, 
\[l^{-2}(x)|d(x)|^2\,\neq\, l^2(x)|a(x)|^2\]
cannot hold for all $x\,\in\, U$.
\end{proof}

\begin{remark}
Let $D$ be an effective divisor on $\Sigma$ of even degree, and let $(L,\,(\alpha,\,\delta))$ be an
${\rm SL}(2,\mathbb R)$-Higgs bundle, where $\delta$ is the canonical section of $\mathcal O_\Sigma(D)$
given by the constant function $1$ on $\Sigma$,
such that \eqref{g1} holds. Assume that $0 \, \leq \text{degree}(L) \, \leq g-1$. If the induced 
symmetric bilinear form is nondegenerate away from $D$, then we have ${\rm Div}(\alpha)\,\geq\, D$
due to Lemma \ref{lem:metdeg}. In particular, for the construction of constant curvature $-4$ metrics with conical 
singularities at $\text{supp}(D)$ we have to impose the following condition:
\[\alpha\,\in \,H^0(\Sigma,\,K_\Sigma^{\otimes 2}\otimes{\mathcal O}_\Sigma(-2D))\,\subset\,
H^0(\Sigma,\,K_\Sigma^{\otimes 2}\otimes{\mathcal O}_\Sigma(-D)).\]
\end{remark}

\begin{theorem}\label{thm1}
Let $D\,=\,\sum_i n_ix_i$ be an effective divisor on a compact Riemann surface $\Sigma$,
with $g_\Sigma\, \geq\, 2$, such that
\[N\,:=\,\sum_i n_i\,<\,2g_\Sigma-2\, ,\]
and $N$ is an even integer. Let $L$ be a holomorphic line bundle on $\Sigma$ such that
\begin{equation}\label{L}
K_\Sigma\otimes L^{-2}\,=\,{\mathcal O}_\Sigma(D)\, .
\end{equation}

\noindent Then, there is a unique largest connected subset
\begin{equation}\label{Ucond}
{\mathcal U}\, \subset\, H^0(\Sigma,\,K_\Sigma^{\otimes 2}\otimes{\mathcal O}_\Sigma(-2D))
\end{equation}
containing $0$ such that for all $\alpha\, \in\, {\mathcal U}$,
the harmonic map associated to the Higgs pair
\[\left(L\oplus L^{-1},\;
\begin{pmatrix}0&\alpha\\ \delta&0\end{pmatrix}\right)\]
gives a constant negative curvature $-4$ metric with conical singularities of order $n_i$ at $x_i.$

The above subset ${\mathcal U}$ is either $\{0\}$ or is open and satisfies the condition that
$$
H^0(\Sigma,\,K_\Sigma^{\otimes 2}\otimes{\mathcal O}_\Sigma(-2D-D_{\rm red}))
\, \subset\, {\mathcal U} \, \subset\,
H^0(\Sigma,\,K_\Sigma^{\otimes 2}\otimes{\mathcal O}_\Sigma(-2D))\, ,
$$
where $D_{\rm red}\,=\,\sum_i x_i$ is the reduced divisor.

When $\alpha\,=\,0$ the corresponding metric is in the conformal class of $\Sigma$.
\end{theorem}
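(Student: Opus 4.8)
The plan is to combine Proposition \ref{h2subseth3}, which shows that whenever the symmetric $2$-tensor $\mathrm{tr}((\Phi_x+\Phi_x^*)^2)$ is nondegenerate it defines a constant negative curvature metric, with a careful local analysis at the support of $D$ and a continuity/connectedness argument in the parameter $\alpha$. First I would establish the behavior at a branch point $x_i$. By \eqref{L} the section $\delta$ is the canonical section of $\mathcal{O}_\Sigma(D)$, so $\mathrm{ord}_{x_i}\delta = n_i$, while for $\alpha \in H^0(\Sigma, K_\Sigma^{\otimes 2}\otimes\mathcal{O}_\Sigma(-2D))$ we have $\mathrm{ord}_{x_i}\alpha \geq 2n_i > n_i$. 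Working in the local normal form \eqref{localnormalform}, the dominant term of the $2$-tensor \eqref{eq-conf-class} near $x_i$ comes from $l^{-2}|d|^2\,dz\otimes d\overline{z}$; I would check from the asymptotics of $l$ (governed by \eqref{SL2Reqtnlocal}) that, after the conformal rescaling the metric inherits from $\delta$ vanishing to order $n_i$, this produces exactly a cone angle $2\pi(n_i+1)$, i.e.\ a conical singularity of order $n_i$ in the stated convention. This local model is the branched analogue of Hitchin's smooth computation (and matches the McOwen--Troyanov prescription \cite{McO,Tr}), and it is where the condition $N < 2g_\Sigma - 2$ enters, guaranteeing $\mathrm{degree}(L) \geq 0$ so that the Higgs bundle is stable and the metric $l$ exists.

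Next I would define $\mathcal{U}$ as the connected component containing $0$ of the set
\[
\{\alpha \in H^0(\Sigma, K_\Sigma^{\otimes 2}\otimes\mathcal{O}_\Sigma(-2D)) \ :\ \mathrm{tr}((\Phi_x+\Phi_x^*)^2) \text{ is nondegenerate on } \Sigma\setminus\mathrm{supp}(D)\}.
\]
By \eqref{eq-non-deg-met}, nondegeneracy at $x\notin\mathrm{supp}(D)$ is the open condition $l^{-2}(x)|d(x)|^2 \neq l^2(x)|a(x)|^2$. The solution $l$ to \eqref{SL2Reqtnlocal} depends continuously (indeed real-analytically) on $\alpha$, and so do the local data $a,d$; hence the nondegeneracy locus is open in the parameter $\alpha$, and $\mathcal{U}$ is either $\{0\}$ or open. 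For the lower inclusion I would argue that if $\alpha \in H^0(\Sigma, K_\Sigma^{\otimes 2}\otimes\mathcal{O}_\Sigma(-2D-D_{\mathrm{red}}))$, then $\mathrm{ord}_{x_i}\alpha \geq 2n_i+1 > 2n_i = \mathrm{ord}_{x_i}\delta$ at every $x_i$, so the hypotheses of Lemma \ref{lem:metdeg} can never be met globally (the order comparison $\mathrm{ord}_q\alpha < \mathrm{ord}_q\delta$ is forced to fail), removing the only mechanism that Lemma \ref{lem:metdeg} provides for forced degeneration; combined with a connectedness-to-$0$ argument this yields nondegeneracy throughout, giving the inclusion $H^0(\Sigma, K_\Sigma^{\otimes 2}\otimes\mathcal{O}_\Sigma(-2D-D_{\mathrm{red}}))\subset\mathcal{U}$. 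The upper inclusion is built into the definition. When $\alpha = 0$ the $2$-tensor \eqref{eq-conf-class} reduces to $l^{-2}|d|^2\,dz\otimes d\overline{z}$, which is manifestly conformal to $dz\otimes d\overline{z}$, establishing the final sentence.

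The main obstacle I expect is proving that nondegeneracy genuinely \emph{persists} along a connected path from $\alpha$ back to $0$, rather than merely being an open condition: openness gives that $\mathcal{U}$ is a union of components, but to identify the component of $0$ I must rule out that the zero set of $l^{-2}|d|^2 - l^2|a|^2$ sweeps across $\Sigma\setminus\mathrm{supp}(D)$ as $\alpha$ varies. The delicate point is that $l$ itself is only known implicitly through the nonlinear PDE \eqref{SL2Reqtnlocal}, so I cannot write the nondegeneracy locus explicitly. The strategy is to control the competition between the two terms via the order-of-vanishing data: for $\alpha$ with $\mathrm{Div}(\alpha)\geq 2D + D_{\mathrm{red}}$, the term $l^2|a|^2$ is pointwise dominated near $\mathrm{supp}(D)$ (by the order count above) and, by a degree/maximum-principle estimate on the globally defined function $\log(l^{-2}|d|^2) - \log(l^2|a|^2)$ derived from \eqref{SL2Reqtn}, cannot overtake $l^{-2}|d|^2$ anywhere on the compact complement; this is essentially Hitchin's argument in the proof of \cite[Theorem 11.2]{Hi} adapted to the branched setting, and carrying the estimate through the conical degeneracies of the background geometry is the technical heart of the proof.
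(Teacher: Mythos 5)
Your core nondegeneracy argument is, in its final form, the same as the paper's: for $0\,\neq\,\alpha$ with ${\rm Div}(\alpha)\,\geq\, 2D+D_{\rm red}$ the ratio $l^4|a|^2/|d|^2$ tends to $0$ at ${\rm supp}(D)$, so its maximum is attained on the complement and Hitchin's strong maximum principle argument (from the proof of Theorem 11.2 in \cite{Hi}) forces the ratio to be $<1$ everywhere, giving positivity via \eqref{eq-non-deg-met}; for general $\alpha$ in $H^0(\Sigma,\,K_\Sigma^{\otimes 2}\otimes{\mathcal O}_\Sigma(-2D))$ the ratio is bounded near each $x_i$ and continuity of the solution in $\alpha$ gives an open neighborhood of $0$. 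Two points of caution there: first, ${\rm ord}_{x_i}\delta\,=\,n_i$, not $2n_i$ (harmless for your inequality); second, your first-pass argument for the lower inclusion --- that the hypotheses of Lemma \ref{lem:metdeg} cannot be met --- proves nothing by itself, since that lemma only gives a \emph{sufficient} condition for degeneration, and its failure is not a nondegeneracy statement. You do repair this in your last paragraph by switching to the maximum-principle estimate, which is the actual mechanism, so this part is recoverable.

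The genuine gap is the conical structure at the $x_i$ when $\alpha\,\neq\,0$. You propose to read the cone angle off from the ``dominant term'' $l^{-2}|d|^2\, dz\otimes d\overline{z}$ together with ``the asymptotics of $l$''. This does not work as stated: for $\alpha\,\neq\,0$ the tensor \eqref{eq-conf-class} has nonvanishing $dz\otimes dz$ and $d\overline{z}\otimes d\overline{z}$ components in any punctured neighborhood of $x_i$, so the metric is \emph{not} conformal for the complex structure of $\Sigma$ there, and a cone angle cannot be identified from a conformal-looking leading term; moreover no local expansion of $l$ at $x_i$ is available a priori --- $l$ is known only through the equation \eqref{SL2Reqtnlocal}, whose zeroth-order terms degenerate exactly at ${\rm supp}(D)$, and neither you nor the paper establishes such asymptotics. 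The paper avoids this entirely by a different mechanism: it forms the measurable Beltrami differential $\beta\,=\,l^2(\overline{a}/d)(d\overline{z}/dz)$, notes $\|\beta\|_\infty\,<\,1$ (reusing the maximum-principle bound), invokes the measurable Riemann mapping theorem \cite{AB} to produce a new complex structure $M$ on the underlying surface in which the metric \emph{is} conformal, observes that the harmonic map then becomes a holomorphic map $\widetilde{M}\,\longrightarrow\,\mathbb{D}$ whose branch order $k$ at $\widetilde{x}_i$ equals the conical order, and finally identifies $k\,=\,n_i$ by continuity of the conical order in $\alpha$ together with its known value at $\alpha\,=\,0$. Without this step (or a substitute asymptotic analysis of the solution near $x_i$), your argument yields a hyperbolic metric on $\Sigma\setminus{\rm supp}(D)$ but does not establish the prescribed conical singularities of order $n_i$, which is the main assertion of the theorem.
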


\begin{proof}
Note that for a generic point $p\,\in\, \Sigma$, the induced bilinear form 
$\text{tr}((\Phi_p+\Phi_p^*)^2)$ is positive definite because $\delta$ and $\alpha$ are
not constant multiplies of each other by the assumption that $N\,<\,2g_\Sigma-2.$

For $\alpha\,=\,0$, consider the --- possibly degenerate --- metric on
$\Sigma$ obtained by pulling back the canonical metric of the hyperbolic
2-space using the equivariant harmonic map $f$ in Proposition \ref{h2subseth3}. It is 
conformal and it degenerates only at the zeros of the Higgs field. These are exactly 
the zeros of the section $\delta$, and we obtain a conical metric of constant curvature $-4$ in the 
conformal class of $\Sigma$. The cone angle at $x_i$ is $2\pi n_i$.

Next, we assume that
\begin{equation}\label{order}
0\, \not=\, \alpha\,\in\, H^0(\Sigma,\,K_\Sigma^{\otimes 2}\otimes{\mathcal O}_\Sigma(-2D-
D_{\rm red}))\, .
\end{equation}
With respect to the local form given in \eqref{localnormalform}, we have
\[\lim_{x\to x_i}l^2(x)\frac{|a(x)| }{|d(x)| }\,=\,0\]
because \eqref{order} holds. 
Hence, we can apply the strategy of the proof of 
\cite[Theorem 11.2(i)]{Hi}: the absolute maximum of the function
\[x\,\longmapsto \,l^4(x)\frac{|a(x)|^2}{|d(x)|^2}\]
does exist at a point $p\,\in\, \Sigma\setminus\{x_1,\,\cdots,\,x_m\}$, and the strong maximum
principle gives that
\[l^4(x)\frac{|a(x)|^2}{|d(x)|^2}\,<\,1\, .\]
By \eqref{eq-non-deg-met}, the induced symmetric bilinear form
\[\text{tr}((\Phi+\Phi^*)^2)\]
is a Riemannian metric away from the points $x_i$. 

Finally, assume that
\begin{equation}\label{orderineq}
\alpha\,\in\, H^0(\Sigma,\,K_\Sigma^{\otimes 2}\otimes{\mathcal O}_\Sigma(-2D))\, .
\end{equation}
We note that the solution of the self-duality equation depends continuously on the initial
data $\alpha$, because we have fixed $L$ and $\delta$. Also,
\[l^4(x)\frac{|a(x)|^2}{|d(x)|^2}\]
is bounded near $x_i$ by \eqref{orderineq}. 
Hence, for any $\alpha\,\in\,\mathcal U\, \subset\,H^0(\Sigma,\,K_\Sigma^{\otimes 2}\otimes{\mathcal O}_\Sigma(-2D))$
in an appropriate open neighborhood $\mathcal U$ of $0$,
the induced symmetric bilinear form
\[\text{tr}((\Phi+\Phi^*)^2)\]
is non-degenerate by continuity, and it defines a constant curvature -4 metric.

It remains to show that in the case $\alpha\,\neq\, 0$ the singularity $x_i$ is conical of order $n_i,$
i.e., that there exist polar coordinates $(r,\,\theta)$ centered at $x_i$ such that the metric of constant curvature $-4$ is given by
\[\tfrac{1}{4}((dr)^2+(n_i+1)^2\sinh(r)^2 (d\theta)^2).\]
To this end consider the measurable Beltrami differential $\beta$ locally given by
\[l^2 \frac{\overline a}{d} \frac{d\overline z}{dz}\, ;\]
compare with \eqref{eq-conf-class}. From the above it follows that for $\alpha\,\in\,\mathcal U$, we have
\[\parallel \beta\parallel_\infty\,<\,1\, ,\]
and by the measurable Riemann mapping theorem (see \cite{AB}) there does exist a Riemann surface structure $M$ on 
the compact topological surface underlying $\Sigma$ such that the metric $\text{tr}((\Phi+\Phi^*)^2)$ on 
$M\setminus\text{supp}(D)\,=\,\Sigma\setminus\text{supp}(D)$ is in the conformal class determined by $M$. Then, the 
(equivariant) harmonic map $f$ is given by a holomorphic map $\widetilde{f}\,\colon\, \widetilde{M}\,
\longrightarrow\, \mathbb D$ defined on the 
universal covering $\widetilde M\,\longrightarrow\, M$ to the unit disc $\mathbb D$.
It can be shown that this holomorphic map is branched of order $n_i$ at the preimage $\widetilde{x}_i
\,\in\,\widetilde M$ of $x_i\,\in\,\text{supp}(D)$. Denote the branch order of $\widetilde f$
at $\widetilde{x}_i$ by $k.$ Clearly, the constant curvature metric must then have a conical singularity of order
$k$ at $x_i\in M$. 
Note that the order of a conical singularity is determined (up to integers) by the monodromy along simple closed 
curves around the singularity and the conical order at $x_i$ depends continuously on $\alpha\,\in\,\mathcal U$. As 
$k\,=\,n_i$ for $\alpha\,=\,0$, the result follows.
\end{proof}

\begin{remark}
In general, the Beltrami differential $\beta$ in the proof of Theorem \ref{thm1} is not smooth at the support of
$D$. Thus, the Riemann surface structure $M$ yields a holomorphic atlas on the surface underlying $\Sigma$ which 
is, in general, not compatible with the holomorphic atlas defining the Riemann surface $\Sigma$.

Notice that by \eqref{L} we have $0\,\le\, \text{degree}(L)\,\le\, g-2$. The $\mathrm{SL}(2,\mathbb{R})$- Higgs bundles 
defined by the data $(L,(\alpha,\delta))$ thus determine points in the non-maximal components of the moduli space 
$\mathcal{M}(\mathrm{SL}(2,\mathbb{R}))$, i.e., in the connected components where the degree of the line bundle is 
strictly less than the Milnor-Wood bound in \eqref{MW}. As described by Hitchin in \cite[Proposition 10.2 (ii)]{Hi} 
these components are $2^{2g}$-fold covers of vector bundles over $Sym^N\Sigma$, the symmetric products of the curve 
$\Sigma$. The points in $Sym^N\Sigma$ correspond to the divisor $D$ in Theorem \ref{thm1} and the subsets $\mathcal{U}$ 
are in the fibers of the vector bundle. The fibers in which $\mathcal{U}$ contain non-zero elements are determined by 
Brill-Noether considerations. In particular, $\mathcal{U}$ is open whenever $\text{degree}(K_\Sigma^{\otimes 
2}\otimes{\mathcal O}_\Sigma(-2D-D_{\rm red}))\,>\,g-1$.
\end{remark}

\section{The case of odd degree divisors}\label{sec: odd degree}

As before, $\Sigma$ is a compact connected Riemann surface with
$g_\Sigma\, :=\, \text{genus}(\Sigma) \, \geq\,2$. 
Let $D\,=\,\sum_i n_ix_i$ be an effective divisor on $\Sigma$
such that
\begin{enumerate}
\item $N\,:=\,\sum_i n_i\,<\,2g_\Sigma-2$, and

\item $N$ is an odd integer.
\end{enumerate}

Fix an unramified connected covering
\begin{equation}\label{vp}
\varphi\, :\, X\, \longrightarrow\, \Sigma
\end{equation}
of degree two. We have $g_X\, :=\, \text{genus}(X)\,=\, 2g_\Sigma-1$.
Define $\widetilde{D}\, :=\, \varphi^*D$. So
$$
\text{degree}(\widetilde{D})\,=\, 2\cdot\text{degree}(D)\,=\,2N\, .
$$
The canonical section of ${\mathcal O}_X(\widetilde{D})$ given by the constant function $1$
on $X$ will be denoted by $\delta_1$.

We note that
$$
\text{degree}(\widetilde{D})\,=\, 2N\, <\, 4g_\Sigma-4\,=\, 2g_X-2\, .
$$
Fix a holomorphic line bundle $\mathcal{L}$ on $X$ such that
\begin{equation}\label{cl}
\mathcal{L}^{\otimes 2}\,=\, K_X\otimes {\mathcal O}_X(-\widetilde{D})\, .
\end{equation}

Set $(\Sigma,\, D)$ in Theorem \ref{thm1} to be the above pair $(X,\, \widetilde{D})$.
We conclude the following:
\begin{enumerate}
\item There is a unique largest open subset
$$
{\mathcal U}'_X\, \subset\, H^0(X,\,K_X^{\otimes 2}\otimes{\mathcal O}_X(-2\widetilde{D}))
$$
containing $0$ such that for all $\alpha'\, \in\, {\mathcal U}'_X$,
the harmonic map associated to the Higgs pair
\begin{equation}\label{ha}
\left(\mathcal{L}\oplus \mathcal{L}^{-1},\;
\begin{pmatrix}0&\alpha'\\ \delta_1 &0\end{pmatrix}\right)
\end{equation}
gives a constant negative curvature $-4$ metric with conical singularities of order $2n_i$ at
each point of $\varphi^{-1}(x_i)$.

\item The above open subset ${\mathcal U}'_X$ satisfies the condition that
$$
H^0(X,\,K_X^{\otimes 2}\otimes{\mathcal O}_X(-2\widetilde{D} -\widetilde{D}_{\rm red}))
\, \subset\, {\mathcal U}'_X \, \subset\,
H^0(X,\,K_X^{\otimes 2}\otimes{\mathcal O}_X(-2\widetilde{D}))\, ,
$$
where $\widetilde{D}_{\rm red}\,=\,\varphi^{-1}(D_{\rm red})$ is the reduced divisor.

\item When $\alpha'\,=\,0$ the corresponding metric is in the conformal class of $X$.
\end{enumerate}

Since $\varphi$ is unramified, we have the inclusion map
$$
H^0(\Sigma,\,K_\Sigma^{\otimes 2}\otimes{\mathcal O}_\Sigma(-2D))\, \hookrightarrow\,
H^0(X,\,K_X^{\otimes 2}\otimes{\mathcal O}_X(-2\widetilde{D}))\, ,\ \ \omega\, \longmapsto\,
\varphi^*\omega\, ;
$$
using it $H^0(\Sigma,\,K_\Sigma^{\otimes 2}\otimes{\mathcal O}_\Sigma(-2D))$ will be considered as a
subspace of $H^0(X,\,K_X^{\otimes 2}\otimes{\mathcal O}_X(-2\widetilde{D}))$. Now define the open subset
\begin{equation}\label{cux}
\widehat{\mathcal U}\, :=\, {\mathcal U}'_X\cap
H^0(\Sigma,\,K_\Sigma^{\otimes 2}\otimes{\mathcal O}_\Sigma(-2D))\, \subset\,
H^0(\Sigma,\,K_\Sigma^{\otimes 2}\otimes{\mathcal O}_\Sigma(-2D))\, .
\end{equation}

If the holomorphic line bundle $\mathcal{L}$ is replaced by a different choice
$\mathcal{L}\otimes\xi$ of the square-root of $K_X\otimes {\mathcal O}_X(-\widetilde{D})$, then
$$
(\mathcal{L}\otimes\xi)^{-1}\,=\, \mathcal{L}^{-1}\otimes\xi^{-1}\,=\,
\mathcal{L}^{-1}\otimes\xi\, ,
$$
because $\xi$ is of order two. This implies that
$$
(\mathcal{L}\otimes\xi)\oplus (\mathcal{L}\otimes\xi)^{-1}\,=\,
(\mathcal{L}\oplus\mathcal{L}^{-1})\otimes\xi\, .
$$
Hence we have
\begin{equation}\label{pi}
{\mathbb P}((\mathcal{L}\otimes\xi)\oplus (\mathcal{L}\otimes\xi)^{-1})\,=\,
{\mathbb P}(\mathcal{L}\oplus\mathcal{L}^{-1})
\end{equation}
{}From this it follows immediately that the Galois group $\text{Gal}(\varphi)\,=\,
{\mathbb Z}/2{\mathbb Z}$ for $\varphi$ has a canonical lift to the projective bundle
$$
{\mathcal P}'\, :=\, {\mathbb P}(\mathcal{L}\oplus\mathcal{L}^{-1})\, \longrightarrow\, X\, .
$$
In other words, there is a holomorphic ${\mathbb C}{\mathbb P}^1$--bundle
\begin{equation}\label{pb}
\varpi\, :\, {\mathcal P} \, \longrightarrow\,\Sigma
\end{equation}
such that $\varphi^*{\mathcal P} \,=\, {\mathcal P}'$. 

{}From \eqref{pi} it follows that the holomorphic
$\text{PSL}(2,{\mathbb C})$--bundle ${\mathcal P}'$ on $X$ is independent
of the choice of the holomorphic line bundle $\mathcal L$ satisfying \eqref{cl}.
Consequently, the holomorphic $\text{PSL}(2,{\mathbb C})$--bundle ${\mathcal P}$ on
$\Sigma$ is independent of the choice of the holomorphic line bundle $\mathcal L$.

It should be clarified that the $C^\infty$ vector bundle $\mathcal{L}\oplus\mathcal{L}^{-1}$
is \textit{not} the pullback to $X$ of a $C^\infty$ vector bundle on $\Sigma$. Indeed, for any
$C^\infty$ vector bundle $V$ on $\Sigma$ such that ${\mathcal P}\,=\, {\mathbb P}(V)$,
the degree of $V$ is an odd integer. So $\text{degree}(\varphi^*V)\, \not=\, 0$,
implying that $\varphi^*V$ is not isomorphic to $\mathcal{L}\oplus\mathcal{L}^{-1}$.
An alternative way to see this is to note that the second Stiefel--Whitney class of
$\mathcal{L}\oplus\mathcal{L}^{-1}$ is nonzero, while the second Stiefel--Whitney class of
any vector bundle on $X$ which is pulled back from $\Sigma$ is zero.

The holomorphic projective bundle ${\mathcal P}\, \longrightarrow\, \Sigma$ can be explicitly described. For that
first note that
$$
{\mathbb P}(\mathcal{L}\oplus\mathcal{L}^{-1})\,=\, {\mathbb P}((\mathcal{L}\oplus\mathcal{L}^{-1})\otimes
\mathcal{L})\,=\, {\mathbb P}(\mathcal{L}^{\otimes 2}\oplus {\mathcal O}_X)
$$
because tensoring by a line bundle does not alter the projective bundle. We have
$$
\mathcal{L}^{\otimes 2}\,=\, K_X\otimes {\mathcal O}_X(-\widetilde{D})\,=\,
\varphi^*(K_\Sigma\otimes {\mathcal O}_\Sigma (-D))\, .
$$
These imply that
$${\mathcal P}\,=\, {\mathbb P}((K_\Sigma\otimes {\mathcal O}_\Sigma (-D))\oplus
{\mathcal O}_\Sigma)\, .
$$
In particular, the projective bundle ${\mathcal P}$ does not depend on the choice of the covering $\varphi$.

For any $\alpha'\, \in\, {\mathcal U}'_X$, we get a flat $\text{SL}(2,{\mathbb C})$--connection
on the $C^\infty$ vector bundle $\mathcal{L}\oplus\mathcal{L}^{-1}$ which is associated to the
Higgs bundle in \eqref{ha}; this flat $\text{SL}(2,{\mathbb C})$--connection will be denoted
by ${\mathcal D}'(\alpha')$. Consider the flat $\text{PSL}(2,{\mathbb C})$--connection
on the projective bundle ${\mathcal P}'$ induced by the
$\text{SL}(2,{\mathbb C})$--connection ${\mathcal D}'(\alpha')$; this induced flat $\text{PSL}(2,
{\mathbb C})$--connection will be denoted by ${\mathcal D}'_P(\alpha')$. From the above
observations it follows that if $\alpha'\, \in\, \widehat{\mathcal U}$ (see \eqref{cux}), then
${\mathcal D}'_P(\alpha')$ descends to a flat $\text{PSL}(2,{\mathbb C})$--connection
on the projective bundle ${\mathcal P}$ over $\Sigma$ constructed in \eqref{pb}.
Let ${\mathcal D}(\alpha')$ denote the flat $\text{PSL}(2,{\mathbb C})$--connection
on ${\mathcal P}$ corresponding to $\alpha'\, \in\, \widehat{\mathcal U}$. The harmonic
reduction of structure group of ${\mathcal P}$ to ${\rm PSU}(2)$ corresponding to
${\mathcal D}(\alpha')$ is evidently the descent of the harmonic reduction of structure
group, to ${\rm PSU}(2)$, of the $\text{PSL}(2,{\mathbb C})$--bundle ${\mathcal P}'$ for the
flat connection ${\mathcal D}'_P(\alpha')$.

Therefore, applying Theorem \ref{thm1} to $(X,\, \widetilde{D})$ we get the following:

\begin{theorem}\label{thm2}
Let $D\,=\,\sum_i n_ix_i$ be an effective divisor on a compact Riemann surface $\Sigma$,
with $g_\Sigma\, \geq\, 2$, such that
\[N\,:=\,\sum_i n_i\,<\,2g_\Sigma-2\, ,\]
and $N$ is an odd integer. Then, there is a unique largest open subset
$$
\widehat{\mathcal U}\, \subset\, H^0(\Sigma,\,K_\Sigma^{\otimes 2}\otimes{\mathcal O}_\Sigma(-2D))
$$
containing $0$ such that for all $\alpha'\, \in\, \widehat{\mathcal U}$,
the harmonic map associated to the ${\rm PSL}(2,{\mathbb C})$--Higgs pair
\[\left({\mathcal P},\;
\begin{pmatrix}0&\alpha'\\ \delta_1&0\end{pmatrix}\right)\]
gives a constant negative curvature $-4$ metric with conical singularities of order $n_i$ at $x_i.$

The above open subset $\widehat{\mathcal U}$ satisfies the condition that
$$
H^0(\Sigma,\,K_\Sigma^{\otimes 2}\otimes{\mathcal O}_\Sigma(-2D-D_{\rm red}))
\, \subset\, \widehat{\mathcal U} \, \subset\,
H^0(\Sigma,\,K_\Sigma^{\otimes 2}\otimes{\mathcal O}_\Sigma(-2D))\, ,
$$
where $D_{\rm red}\,=\,\sum_i x_i$ is the reduced divisor.

When $\alpha'\,=\,0$ the corresponding metric is in the conformal class of $\Sigma$.
\end{theorem}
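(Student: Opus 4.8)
The plan is to derive Theorem~\ref{thm2} from Theorem~\ref{thm1} applied on the double cover $X$, and then to descend the resulting geometry along $\varphi$. Since $\widetilde D\,=\,\varphi^*D$ has even degree $2N\,<\,2g_X-2$ and \eqref{cl} furnishes a square root $\mathcal L$ of $K_X\otimes{\mathcal O}_X(-\widetilde D)$, Theorem~\ref{thm1} applies verbatim on $X$ and yields the open set ${\mathcal U}'_X$ together with, for each $\alpha'\,\in\,{\mathcal U}'_X$, a constant curvature $-4$ metric on $X$ with conical singularities along $\varphi^{-1}({\rm supp}(D))$. As $\varphi$ is unramified, $\widetilde D$ has multiplicity $n_i$ at each of the two points over $x_i$, so the relevant conical orders on $X$ are $n_i$.

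Next I would isolate the Galois-invariant data. Let $\sigma$ denote the nontrivial element of ${\rm Gal}(\varphi)\,=\,{\mathbb Z}/2{\mathbb Z}$. By \eqref{cux}, the elements of $\widehat{\mathcal U}$ are precisely the $\alpha'\,\in\,{\mathcal U}'_X$ of the form $\varphi^*\omega$, i.e.\ those fixed by $\sigma$, and for such $\alpha'$ the whole construction is $\sigma$-equivariant. At the level of the vector bundle $\mathcal L\oplus\mathcal L^{-1}$ this equivariance holds only projectively, the obstruction being the nonzero second Stiefel--Whitney class noted above; but by \eqref{pi} the projective bundle ${\mathcal P}'\,=\,{\mathbb P}(\mathcal L\oplus\mathcal L^{-1})$ carries a canonical $\sigma$-lift and descends to the bundle ${\mathcal P}$ on $\Sigma$ of \eqref{pb}. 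The flat ${\rm PSL}(2,\mathbb C)$-connection ${\mathcal D}'_P(\alpha')$ is induced by $\sigma$-invariant Higgs data, hence is itself $\sigma$-invariant and descends to the flat connection ${\mathcal D}(\alpha')$ on ${\mathcal P}$.

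The decisive step is to descend not merely the flat projective bundle but the harmonic structure. For this I would invoke uniqueness of the harmonic reduction, equivalently of the equivariant harmonic map of Proposition~\ref{h2subseth3}: the involution $\sigma$ sends the harmonic reduction of ${\mathcal D}'_P(\alpha')$ to another harmonic reduction of the same connection, so by uniqueness it must be $\sigma$-fixed and therefore descends to a harmonic reduction of ${\mathcal D}(\alpha')$ on ${\mathcal P}$. Pulling back the hyperbolic metric through the resulting descended harmonic map gives, by Proposition~\ref{h2subseth3}, the symmetric $2$-tensor ${\rm tr}((\Phi+\Phi^*)^2)$ on $\Sigma$; since its $\varphi$-pullback is exactly the nondegenerate tensor produced by Theorem~\ref{thm1} for $\alpha'\,\in\,{\mathcal U}'_X$, it is nondegenerate on $\Sigma\setminus{\rm supp}(D)$ and defines there a metric of constant curvature $-4$. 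Because $\varphi$ is a local isometry for the descended metric, the conical singularity at $x_i$ has the same order $n_i$ as at its preimages.

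It then remains to verify the bookkeeping assertions. Nondegeneracy of the tensor on $\Sigma$ is equivalent to that of its $\varphi$-pullback, so $\widehat{\mathcal U}\,=\,{\mathcal U}'_X\cap H^0(\Sigma,\,K_\Sigma^{\otimes 2}\otimes{\mathcal O}_\Sigma(-2D))$ is precisely the set of sections in the pullback subspace yielding a genuine metric, which identifies it as the unique largest open subset containing $0$ with the stated property. The two inclusions follow from the corresponding inclusions for ${\mathcal U}'_X$ on $X$, combined with the identities $\varphi^*K_\Sigma\,=\,K_X$, $\varphi^*{\mathcal O}_\Sigma(D)\,=\,{\mathcal O}_X(\widetilde D)$ and $\varphi^*D_{\rm red}\,=\,\widetilde D_{\rm red}$, all immediate from $\varphi$ being unramified. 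For $\alpha'\,=\,0$ the tensor is conformal, so the metric lies in the conformal class of $X$ and descends into that of $\Sigma$. I expect the genuine difficulty to be concentrated in the descent of the harmonic reduction: one must be sure both that passing to ${\rm PSL}(2,\mathbb C)$ removes the Stiefel--Whitney obstruction so that bundle and connection descend, and that uniqueness of the equivariant harmonic map forces the harmonic metric to be $\sigma$-invariant; everything else reduces to routine computation on the unramified cover.
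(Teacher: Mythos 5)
Your proposal is correct and takes essentially the same route as the paper: apply Theorem \ref{thm1} to $(X,\,\widetilde D)$, use \eqref{pi} to descend the projective bundle, the flat ${\rm PSL}(2,\C)$--connection and the harmonic reduction along the unramified double cover, and take $\widehat{\mathcal U}$ to be the intersection \eqref{cux}, with the inclusions following from $\varphi^*K_\Sigma\,=\,K_X$ and $\varphi^*D_{\rm red}\,=\,\widetilde D_{\rm red}$. The only differences are that you make explicit the uniqueness argument forcing the harmonic reduction to be Galois-invariant (which the paper treats as evident), and you correctly record the conical orders on $X$ as $n_i$ at each preimage of $x_i$, consistent with the descended orders $n_i$ on $\Sigma$.
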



\section*{Acknowledgement}

We thank the referee for helpful comments. SB thanks Graeme Wilkin for helpful conversations. IB is partially 
supported by a J. C. Bose Fellowship. SH is supported by the DFG grant HE 6829/3-1 of the DFG priority program SPP 
2026 {\em Geometry at Infinity}. SD has been partially supported by the French government through the UCAJEDI 
Investments in the Future project managed by the National Research Agency (ANR) with the reference number 
ANR2152IDEX201.

\end{document}